\theoremstyle{plain}
\newtheorem{theorem}{Theorem}[section]
\newtheorem{lemma}[theorem]{Lemma}
\theoremstyle{definition}
\newtheorem{remark}[theorem]{Remark}
\numberwithin{equation}{section}
\newcommand{\dd}{\,\mathrm{d}}
\newcommand{\ts}{\hspace{0.5pt}}
\newcommand{\defeq}{\mathrel{\mathop:}=}
\DeclareMathOperator{\card}{\mathrm{card}}
\newcommand{\vL}{\varLambda}
\newcommand{\ZZ}{\mathbb{Z}}
\newcommand{\QQ}{\mathbb{Q}}
\newcommand{\RR}{\mathbb{R}\ts}
\newcommand{\NN}{\mathbb{N}}
\newcommand{\XX}{\mathbb{X}}
\newcommand{\exend}{\hfill $\Diamond$}
\newcommand{\rad}{\textnormal{rad}}
\newcommand{\myfrac}[2]{\frac{\raisebox{-2pt}{$#1$}}
      {\raisebox{0.5pt}{$#2$}}}
\begin{document}

\title[Scaling of $k$-free diffraction]{Scaling of the
diffraction measure of \\[2mm]
$k$-free integers near the origin}

\author{Michael Baake} 
\address{Fakult\"{a}t f\"{u}r Mathematik,
  Universit\"{a}t Bielefeld,\newline \hspace*{\parindent}Postfach
  100131, 33501 Bielefeld, Germany}
\email{mbaake@math.uni-bielefeld.de}

\author{Michael Coons}
\address{School of Mathematical and Physical Sciences,
University of Newcastle, 
\newline \hspace*{\parindent}University Drive, Callaghan NSW 2308, Australia}
\email{michael.coons@newcastle.edu.au}

\begin{abstract}
  Asymptotics are derived for the scaling of the total diffraction
  intensity for the set of $k$-free integers near the origin, which
  is a measure for the degree of patch fluctuations.
\end{abstract}

\maketitle
\thispagestyle{empty}

\section{Introduction}

Given a point set $\vL$ in Euclidean $d$-space, an immediate natural
question to ask is that of the existence of its density, and further,
if the density does exist, how it fluctuates locally. More generally,
one can consider point patches and define means and variances for
their appearance. When $\vL$ is a lattice, there is no fluctuation (in
the sense that each patch repeats lattice-periodically), while a
random set such as the positions of an ideal gas (as modelled by a
Poisson process) shows a lot of fluctuation. What intermediate regimes
exist is a natural aspect of spatial order, and such considerations
have lately garnered much interest. 

More recently, due to general progress in the theory of aperiodic
order, see \cite{TAO} and references therein for background,
fluctuation or variance considerations have been extended to such
systems, both in the projection realm and in the context of inflation
systems \cite{Soc1,Soc2,BG-scaling}. The key to many of these
investigations is understanding the scaling behaviour of the
diffraction measure $\widehat{\gamma}$, which is the mathematical
counterpart of the \emph{structure factor} used in physics, in the
vicinity of the origin (in reciprocal space).  In one dimension, the
scaling behaviour of
$Z(x) \defeq \widehat{\gamma} \bigl( (0,x]\bigr)/\ts \widehat{\gamma}
\bigl( \{ 0 \} \bigr)$ as $x\to 0^+$ is an important tool for this
\cite{TS,BG-scaling}. Indeed, one can interpret a decay rate
$Z(x)/x\to 0$, as $x\to 0^+$, of the diffraction intensity as
fluctuations (as $R\to\infty$ in direct space) of the point set and
its patches in density; see \cite{Luck} for early results on aperiodic
examples. Within this framework, Torquato and Stillinger \cite{TS}
introduced and investigated the notion of \emph{hyperuniformity},
referring to point patterns that do not possess infinite-wavelength
fluctuations.

While one gets $Z(x) = x$ for the homogeneous Poisson process of unit
density in $\RR$, power laws of the form $Z(x) \sim x^{\alpha}$ with
$\alpha > 1$, as $x\to 0^+$, are typical for aperiodically ordered
sets \cite{Luck, Soc1,Soc2}; in this way, these sets are hyperuniform. 
Systems such as the Thue{\ts}--Morse chain show a decay that is
faster than any power \cite{GL,BG-scaling}, and a lattice displays a
function $Z(x)$ that drops down to $0$ for sufficiently small $x>0$,
thus signifying a perfect (in fact, periodic) repetition of patches of
any size. In this way, the statistic $Z(x)$ allows for various notions 
of intermediate order; for a nice discussion on this topic, see the 
introduction of Brauchart, Grabner and Kusner \cite{BGK}, as well
as \cite{BG-scaling} for a summary of results.

Naturally, one can also address such questions for point sets of
number-theoretic origin, such as the square-free integers on the line
and their various generalisations; see \cite{BH,PH,BHS} and references
therein for systematic examples. Here, we consider the set $V_k$ of
$k$-free integers with fixed $k\geqslant 2$, that is, the elements of
$\ZZ$ that are not divisible by the $k$-th power of any (rational) prime
number. The sets $V_k$ are examples of weak model sets of maximal
density, and are thus approachable by the projection method
\cite{BHS,Keller}. The diffraction measure of $V_k$ is known to be a
pure point measure \cite{BMP}, which is explicitly computed in terms
of elementary number-theoretic functions. So, we can investigate the
function $Z_k (x)$ for this family of point sets in $\ZZ\subset \RR$.
With the knowledge of the diffraction measure, which will be recalled
below, our task can be termed as follows.

For any integer $q\ne 0$, let $\bar{q}\defeq\rad(q)$ denote the
square-free part of $ q $, that is, its square-free divisor of
largest modulus. For $q, k\in\NN$ with $k\geqslant 2$, set
\begin{equation}\label{eq:def-fk}
  f^{}_k (q) \, \defeq \,\begin{cases}
    \prod_{p|\bar{q}}\frac{1}{p^k-1} , &\mbox{if
      $q$ is $(k+1)$-free,}\\ 0 , &\mbox{otherwise.}\end{cases}
\end{equation}
Clearly, $f^{}_{k} (1)=1$, and $f^{}_{k} (q) = f^{}_{k} (\bar{q})$ 
when $q$ is $(k+1)$-free.  Also, write
\begin{equation}\label{eq:def-genphi}
  \varphi(x,q) \, \defeq \,
  \card\{1\leqslant m\leqslant qx: \gcd(m,q)=1\} 
\end{equation}
for the $2$-parameter totient function, where both $x>0$ and $q\in\NN$
are assumed. Note that $\varphi (x,q) =0$ whenever $qx<1$.  In this
article, we are interested in analysing the asymptotics, as
$x\to0^+$, of
\begin{equation}\label{eq:Z-sum}
    Z^{}_k(x) \, = \sum_{\substack{q\in\NN_{k+1}\\ q\geqslant 1/x}}
    \varphi(x,q) f^{}_k (q)^2 ,
\end{equation}
where $\NN_{j}$ with $j\geqslant 2$ denotes the set of $j$-free
positive integers; that is, $\NN_{j}=V_j\cap\NN$.

As our main result, we shall establish the following asymptotic.

\begin{theorem}\label{thm:main}
  Let\/ $k\geqslant 2$ be a fixed positive integer. As\/
  $x \to 0^{+}$, we have
\[ 
  \log \bigl(Z^{}_k(x)\bigr) \, \sim \,
  \left(2 - \myfrac{1}{k}\right) \log (x) \ts .
\]
\end{theorem}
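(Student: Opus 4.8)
The plan is to prove the two power bounds $Z^{}_k(x) \gg_k x^{2-1/k}$ and, for every small $\epsilon>0$, $Z^{}_k(x) \ll_{\epsilon} x^{2-1/k-\epsilon}$; taking logarithms, dividing by $\log(x)<0$ and letting $x\to0^+$ then forces $\log(Z^{}_k(x))/\log(x)\to 2-1/k$, which is the assertion. Heuristically, the sum in \eqref{eq:Z-sum} is controlled not by small $q$ (where $f^{}_k$ is largest) but by the $(k+1)$-free integers $q$ of size comparable to $1/x$ whose radical $\bar q$ has size comparable to $x^{-1/k}$: for such $q$ one has $q = \prod_{p\mid\bar q} p^{a_p}$ with all exponents $a_p$ close to $k$, so $f^{}_k(q)^2 = f^{}_k(\bar q)^2$ is of order $(\bar q)^{-2k} \asymp x^2$, while the number of admissible radicals is of order $x^{-1/k}$.

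For the lower bound I would keep only the terms with $q = r^k$, where $r$ runs over the squarefree integers in $[\ts x^{-1/k}, 2x^{-1/k}]$. Each such $q$ is $(k+1)$-free with $\bar q = r$, and since $qx = r^k x \geqslant 1$ we have $\varphi(x,q)\geqslant 1$ (the value $m=1$ is coprime to $q$), while $f^{}_k(q)^2 = \prod_{p\mid r}(p^k-1)^{-2} \geqslant r^{-2k} \geqslant 2^{-2k} x^2$. As the number of squarefree integers in $[\ts x^{-1/k}, 2x^{-1/k}]$ is $\frac{6}{\pi^2}\ts x^{-1/k} + O\bigl(x^{-1/(2k)}\bigr)$, and hence exceeds $x^{-1/k}$ once $x$ is small, this gives $Z^{}_k(x) \gg_k x^{2-1/k}$, so that $\limsup_{x\to0^+}\log(Z^{}_k(x))/\log(x) \leqslant 2-1/k$.

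For the upper bound I would first drop the coprimality constraint via $\varphi(x,q)\leqslant qx$, reducing to $\sum_{q\geqslant 1/x,\,q\in\NN_{k+1}} q\ts f^{}_k(q)^2 \ll_{\epsilon} x^{1-1/k-\epsilon}$, and then organise this sum according to the radical $r = \bar q$. A $(k+1)$-free $q$ divides $r^k$, so the constraint $q\geqslant 1/x$ forces $r\geqslant x^{-1/k}$; for a fixed such $r$ there are exactly $k^{\omega(r)}$ integers $q$ with $\bar q = r$ and $q\mid r^k$, each at most $r^k$, while $f^{}_k(r)^2 r^k = \prod_{p\mid r} p^k (p^k-1)^{-2} = r^{-k}\prod_{p\mid r}(1-p^{-k})^{-2} \leqslant \zeta(k)^2\ts r^{-k}$. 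Hence $\sum_{q\geqslant 1/x} q\ts f^{}_k(q)^2 \leqslant \zeta(k)^2 \sum_{r\geqslant x^{-1/k}} k^{\omega(r)} r^{-k}$, and inserting $k^{\omega(r)}\ll_{\epsilon} r^{\epsilon}$ together with the elementary tail bound $\sum_{r\geqslant M} r^{\epsilon-k} \ll_{\epsilon} M^{1+\epsilon-k}$ (which holds for $0<\epsilon<k-1$) at $M = x^{-1/k}$ yields $Z^{}_k(x)\ll_{\epsilon} x\cdot x^{1-1/k-\epsilon/k} = x^{2-1/k-\epsilon/k}$. Letting $x\to0^+$ and then $\epsilon\to0^+$ gives $\liminf_{x\to0^+}\log(Z^{}_k(x))/\log(x) \geqslant 2-1/k$, and together with the lower bound this proves the theorem.

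The only step needing genuine care is the upper bound, where one must verify that replacing each divisor $q\mid r^k$ by $r^k$ and its multiplicity by $k^{\omega(r)}$ does not destroy convergence of the resulting $r$-series; it does not, precisely because the hypothesis $k\geqslant 2$ makes $\sum_{r} k^{\omega(r)} r^{-k}$ convergent, with a tail of the correct order $M^{1-k+\epsilon}$. Everything else is routine. I remark that with more effort the logarithmic asymptotic could be sharpened to $Z^{}_k(x)\asymp x^{2-1/k}$, using $\varphi(x,q) = \frac{\phi(q)}{q}\ts qx + O\bigl(2^{\omega(q)}\bigr)$ and an exact evaluation of the associated Dirichlet series, but such a refinement is not needed here.
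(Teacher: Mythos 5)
Your argument is correct, and it takes a genuinely different (and more elementary) route than the paper. The paper works with the full sum: for the lower bound it restricts to squarefree $q$ with $q^{\alpha}\geqslant 1/x$, sums over all divisors $\ell\mid q^{k-1}$ using $\varphi(x,q)=x\ts\varphi(q)+O\bigl(d(q)\bigr)$ and the identity $\varphi(q)\ts\sigma(q^{k-1})f_k(q)=1$, and then evaluates the resulting squarefree tail sum $\sum_{q\geqslant y^{-1}}\lvert\mu(q)\rvert q^{-k}$ by partial summation, juggling a parameter $\alpha=k(1-\varepsilon)$ to keep the error terms subordinate; the upper bound is handled in the same framework with $\alpha=k$. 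You instead get the lower bound from the single ``diagonal'' family $q=r^k$ with $r$ squarefree in $[x^{-1/k},2x^{-1/k}]$, using only $\varphi(x,q)\geqslant 1$ and $f_k(q)>r^{-k}$, and the upper bound from the crude estimate $\varphi(x,q)\leqslant qx$, the count $k^{\omega(r)}$ of $(k+1)$-free $q$ with radical $r$, the bound $r^k f_k(r)^2\leqslant \zeta(k)^2 r^{-k}$, and $k^{\omega(r)}\ll_{\varepsilon}r^{\varepsilon}$ with a standard tail estimate. Since the theorem only asserts a logarithmic asymptotic, order-of-magnitude bounds suffice, so your proof is shorter and avoids the Codec\`a--Nair inequality and the partial-summation lemma; what the paper's more careful bookkeeping buys is explicit leading constants (e.g.\ the factor $\bigl((k-1)\zeta(2)\bigr)^{-1}$) and bounds much closer to the true power law $Z_k(x)\asymp x^{2-1/k}$ later established by Rome and Sofos, which your $x^{2-1/k-\varepsilon/k}$ upper bound only approaches as $\varepsilon\to 0^+$.

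One small slip: since $6/\pi^2<1$, the number of squarefree integers in $[x^{-1/k},2x^{-1/k}]$ does \emph{not} exceed $x^{-1/k}$; it is, however, at least $\tfrac{1}{2}\ts x^{-1/k}$ for small $x$, which is all your lower bound needs, so the conclusion $Z_k(x)\gg_k x^{2-1/k}$ is unaffected.
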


From here on, the article is organised as follows. In
Section~\ref{sec:prelim}, we recall the essential results on the
diffraction measure of the $k$-free integers and then state some
results of elementary or asymptotic nature that we require for the
proof of Theorem~\ref{thm:main} in Section~\ref{sec:proof}. Finally,
we briefly comment on further developments in
Section~\ref{sec:further}.

\section{Preliminaries and known results}\label{sec:prelim}

Let us begin by recalling some results from \cite{BMP} on the diffraction
of $k$-free integers. It is well known that the set $V_k$ has natural
density $1/\zeta(k)$, which can be obtained as a limit along any
averaging sequence of growing intervals around an arbitrary, but fixed
centre. This is called \emph{tied density} in \cite{BMP}. If any such
averaging sequence is fixed, say $\bigl( [-n,n]\bigr)_{n\in\NN}$ for
instance, also all patch frequencies exist; in particular, the
$2$-point correlations within $V_k$ exist.  If $\eta^{}_{k} (m)$ is
the frequency of occurrence of two points within $V_k$ at distance
$m \in\ZZ\setminus \{0\}$, and $\eta^{}_{k} (0) = 1/\zeta(k)$ is the
density of $V_k$, one obtains the \emph{autocorrelation measure} of
$V_k$ as
\[
     \gamma^{\phantom{j}}_{k} \, = \sum_{m\in\ZZ}
     \eta^{\phantom{j}}_{k} (m) \, \delta^{}_{m} \ts ,
\]
where $\delta_x$ denotes the normalised Dirac measure at $x$; compare
\cite[Ch.~9 and Sec.~10.4]{TAO}.

The autocorrelation measure is positive definite, and hence Fourier
transformable as a measure \cite[Prop.~8.6]{TAO};
$\widehat{\ts\gamma^{}_{k}\ts}$ is known as the \emph{diffraction
  measure} of $V_k$. The main result of \cite{BMP} on
$\widehat{\ts\gamma^{}_{k}\ts}$ can be summarised as follows.

\begin{theorem}
   The diffraction measure\/ $\widehat{\ts\gamma^{}_{k}\ts}$
   of\/ $V^{}_k$ is a pure point measure, supported on
\[
    L^{\circledast}_{k} \, \defeq \, \big\{ y \in \QQ :
     y = \tfrac{m}{q} \text{ with }
     m\in \ZZ, \, q\in\NN^{}_{k+1} \text { and }
     \gcd (m,q) =1  \big\} ,
\] 
   which is a subgroup of\/ $\QQ$. More precisely, the diffraction 
   measure reads
\[
    \widehat{\ts\gamma^{}_{k}\ts} \, = \sum_{z\in L^{\circledast}_{k}}
    I^{}_{k} (z) \, \delta^{}_{z} \ts ,
\]   
   where, for any\/ $z = \frac{m}{q} \in L^{\circledast}_{k}$, the intensity
   is given by
\[
     I^{}_{k} ( z )  \, = \, \left( \frac{f^{}_{k} (q)}{\zeta (k)}\right)^{2}
\]
     with the function\/  $f^{}_{k} (q)$ as defined in Eq.~\eqref{eq:def-fk}. 
     \qed
\end{theorem}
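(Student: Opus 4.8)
The plan is to compute the Fourier--Bohr coefficients of the Dirac comb $\omega^{}_k\defeq\sum_{n\in V^{}_k}\delta^{}_n$, which for $z\in\RR$ are
\[
  A^{}_k(z)\,\defeq\,\lim_{N\to\infty}\frac{1}{2N+1}
  \sum_{\substack{n\in V^{}_k\\ |n|\leqslant N}} e^{-2\pi\mathrm{i}zn}\ts ,
\]
and to deduce the diffraction from them. Since all patch frequencies of $V^{}_k$ exist along the averaging sequence fixed in the preliminaries (reflecting its nature as a weak model set of maximal density, compare \cite{BHS,Keller}), these limits exist, and general amplitude theory yields $|A^{}_k(z)|^2\leqslant\widehat{\ts\gamma^{}_k\ts}(\{z\})$ for every $z$; see \cite{TAO}. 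Moreover, $\widehat{\ts\gamma^{}_k\ts}$ is a positive and $\ZZ$-periodic measure, and Fourier inversion gives $\eta^{}_k(m)=\int_0^1 e^{2\pi\mathrm{i}mt}\dd\widehat{\ts\gamma^{}_k\ts}(t)$; at $m=0$ this shows that its total mass over one period equals $\eta^{}_k(0)=1/\zeta(k)$. The strategy is therefore to compute the $A^{}_k(z)$ explicitly and then to show that the atoms $|A^{}_k(z)|^2$ already account for this full mass.

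For the computation I would use the elementary identity $\bs{1}_{V^{}_k}(n)=\sum_{d^k\mid n}\mu(d)$ and interchange the M\"{o}bius sum with the average. For fixed $d$, the mean of $e^{-2\pi\mathrm{i}zn}$ over the progression $d^k\ZZ$ vanishes unless $z\ts d^k\in\ZZ$, in which case it equals $d^{-k}$. Writing $z=m/q$ in lowest terms, the surviving condition is $q\mid d^k$, so that
\[
  A^{}_k\bigl(\tfrac{m}{q}\bigr)\,=
  \sum_{\substack{d\geqslant 1\\ q\mid d^k}}\frac{\mu(d)}{d^k}\ts .
\]
Because $\mu(d)=0$ unless $d$ is squarefree, and for squarefree $d$ the relation $q\mid d^k$ holds exactly when $q$ is $(k+1)$-free and $\bar q=\rad(q)$ divides $d$, the sum vanishes unless $q\in\NN_{k+1}$. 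When $q\in\NN_{k+1}$, factoring $d=\bar q\,e$ with $\gcd(e,\bar q)=1$ turns the remaining sum into an Euler product, and a short calculation gives $A^{}_k(m/q)=\mu(\bar q)\,f^{}_k(q)/\zeta(k)$. Squaring and using $\mu(\bar q)^2=1$ produces the claimed intensity $I^{}_k(z)=\bigl(f^{}_k(q)/\zeta(k)\bigr)^2$.

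The support and subgroup claims are then formal. The coefficient $A^{}_k(m/q)$ is nonzero precisely when $q\in\NN_{k+1}$ with $\gcd(m,q)=1$, which is the defining condition for $m/q\in L^{\circledast}_k$; and the subgroup property is transparent from the valuation description $L^{\circledast}_k=\{x\in\QQ: v^{}_p(x)\geqslant -k\text{ for every prime }p\}$, since $p$-adic valuations satisfy $v^{}_p(x+y)\geqslant\min\{v^{}_p(x),v^{}_p(y)\}$. To rule out a continuous component, I would compute the total atomic mass over one period: by multiplicativity this reduces to the identity $\sum_{q\in\NN_{k+1}}\varphi(q)\ts f^{}_k(q)^2=\zeta(k)$, whose Euler factor at a prime $p$ is $1+(p^k-1)^{-1}=(1-p^{-k})^{-1}$, so the product is $\zeta(k)$ and hence $\sum_{z\in[0,1)}|A^{}_k(z)|^2=1/\zeta(k)$. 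Combining this with the inequality $|A^{}_k(z)|^2\leqslant\widehat{\ts\gamma^{}_k\ts}(\{z\})$ and the positivity bound $\sum_{z\in[0,1)}\widehat{\ts\gamma^{}_k\ts}(\{z\})\leqslant\widehat{\ts\gamma^{}_k\ts}\bigl([0,1)\bigr)=1/\zeta(k)$ forces equality throughout. Thus $\widehat{\ts\gamma^{}_k\ts}(\{z\})=|A^{}_k(z)|^2$ for all $z$, the pure-point part carries the entire mass, and no continuous scattering survives.

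The main obstacle is conceptual rather than computational, and lies in the first paragraph: the existence of the Fourier--Bohr limits and the amplitude inequality $|A^{}_k(z)|^2\leqslant\widehat{\ts\gamma^{}_k\ts}(\{z\})$, which is where the structure of $V^{}_k$ as a weak model set of maximal density genuinely enters. Granting this input, the arithmetic is entirely routine, the support and subgroup statements are immediate, and the Euler-product identity neatly certifies both the intensity formula and the absence of diffuse scattering.
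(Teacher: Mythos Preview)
The paper does not actually prove this theorem; it is quoted from \cite{BMP} as a known result (note the terminal \verb|\qed| in the statement and the sentence introducing it). Your sketch is a correct and self-contained route to the result. The original argument in \cite{BMP} proceeds differently: there one first computes the autocorrelation coefficients $\eta^{}_{k}(m)$ explicitly by counting, for each modulus $p^k$, the admissible residue pairs, and then Fourier transforms $\gamma^{}_{k}$ directly, verifying by a separate summability estimate that the resulting formal series defines a pure point measure. Your approach via Fourier--Bohr amplitudes together with a mass-balance argument is more in line with the toolkit of \cite{TAO,BHS}; it bypasses the explicit form of $\eta^{}_{k}(m)$ entirely and obtains pure-pointedness from the single Euler-product identity $\sum_{q\in\NN_{k+1}}\varphi(q)\ts f^{}_{k}(q)^2=\zeta(k)$, which simultaneously certifies the intensity formula and excludes any continuous component.

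Two small remarks. First, your opening sentence attributes the existence of the Fourier--Bohr limits $A^{}_{k}(z)$ to the existence of patch frequencies; this is fine for rational $z$ but does not by itself cover irrational $z$. However, your actual computation---M\"obius inversion followed by a termwise limit, justified by dominated convergence since the $d$-th summand is bounded by $|\mu(d)|/d^{k}$ uniformly in $N$---establishes existence for all $z$ directly, so nothing is missing. Second, the amplitude inequality $|A^{}_{k}(z)|^2\leqslant\widehat{\ts\gamma^{}_{k}\ts}(\{z\})$ you invoke is indeed the standard Hof-type bound recorded in \cite[Ch.~9]{TAO}, and it applies here because the autocorrelation $\gamma^{}_{k}$ exists along the chosen averaging sequence; your identification of this as the one genuinely non-elementary input is accurate.
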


Note that $0=\frac{0}{1}\in L^{\circledast}_{k}$, with
$I^{}_{k} (0) = \zeta(k)^{-2} = \widehat{\ts \gamma^{}_{k} \ts} \bigl(
\{ 0 \} \bigr)$.  Clearly, the diffraction measure is reflection
symmetric with respect to the origin, which means that the scaling
near $0$ can be determined from the positive side. Here, for $x>0$,
one gets
\[
  Z^{}_{k} (x) \, \defeq \, \frac{\widehat{\ts \gamma^{}_{k} \ts}
    \bigl( (0,x] \bigr)}{\widehat{\ts \gamma^{}_{k} \ts}
    \bigl( \{ 0 \} \bigr)}\, = \, \zeta(k)^2 \!
    \sum_{\substack{z\in L^{\circledast}_{k}\\
    0 < z \leqslant x}} I^{}_{k} (z) \, = 
    \sum_{q\geqslant 1/x} \sum_{\substack{1\leqslant m \leqslant qx \\
    \gcd (m,q) =1}} f^{}_{k} (q)^2 ,
\]
which leads immediately to Eq.~\eqref{eq:Z-sum}. Now, according to the 
definition of $\varphi (x,q)$, without changing the
value of $Z_k(x)$, we can enlarge the summation set in \eqref{eq:Z-sum}
to include all $q\in\NN_{k+1}$ subject to the weaker condition
$\bar{q}^k \geqslant 1/x$. This is possible since, whenever
$q\in\NN_{k+1}$, we have both $q\leqslant \bar{q}^k$ and
$\varphi (x,q) =0$ for all terms with $q<1/x$; the latter statement is
true because the interval $[1,qx]$ is empty under this condition.
Consequently, we have
\begin{equation}\label{eq:Z-alt}
    Z^{}_k (x) \, = 
    \sum_{\substack{q\in\NN_{k+1}\\ \bar{q}^k \geqslant 1/x}}
    \varphi(x,q) f^{}_k (q)^2 .
\end{equation}

\begin{remark}
  The set of $k$-free integers defines a topological dynamical system
  $(\XX_k, \ZZ)$, where $\XX_k$ is the orbit closure of $V_k$ under
  the natural, continuous shift action of $\ZZ$ in the local
  topology. The frequency measure $\nu$ with respect to the chosen
  averaging sequence is well defined and known as the \emph{Mirsky
    measure}. It is ergodic, and $V_k$ is a generic set for it; see
  \cite{BHS} and references therein for details.  The
  measure-theoretic dynamical system $(\XX_k, \ZZ, \nu )$ has pure
  point dynamical spectrum, where the latter (in additive notation) is
  precisely the Abelian group $L^{\circledast}_{k}$ from above. This
  follows from \cite{BMP} via the equivalence theorem on pure point
  spectra \cite{BL}; compare \cite{BHS}. For the case $k=2$, it has
  also been derived by explicit means in \cite{CS}.  Alternatively, it
  systematically follows from Keller's new approach
  \cite{Keller,Keller2}.  \exend
\end{remark}

To continue, we require the following preliminary results from
\cite{Apo,HW}, where, for $n \in \NN$, we use $d(n)$ to denote the
number of positive divisors of $n$, $\sigma(n)$ for the sum of those
divisors, and $\varphi(n)$ for Euler's totient function, that is,
$\varphi(n)=\varphi(1,n)$ with the function from
\eqref{eq:def-genphi}.

\begin{lemma}\label{lem:fk}
  For any integers\/ $q,k\geqslant 2$ with $q\in \NN_{k+1}$, we have
\[  
  f^{}_k (q) \, = \, \frac{1}
  {\varphi(\bar{q})\, \sigma(\bar{q}^{\ts k-1})}
  \qquad \text{and} \qquad
  \frac{1}{\bar{q}^{\ts k}} \, < \,
  f^{}_k(q) \, < \, \frac{\zeta (k)}{\bar{q}^{\ts k}} \ts .
\]
\end{lemma}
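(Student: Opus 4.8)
The plan is to verify both assertions by reducing everything to the square-free integer $\bar q$, since $f^{}_k(q)=f^{}_k(\bar q)$ for $(k+1)$-free $q$ and both sides of the claimed identity depend only on $\bar q$. Write $\bar q = p^{}_1 \cdots p^{}_r$ as a product of distinct primes. First I would establish the closed-form identity. From the definition in \eqref{eq:def-fk}, $f^{}_k(q) = \prod_{i=1}^{r} \frac{1}{p_i^{k}-1}$, so it suffices to factor $p^{k}-1$ appropriately for each prime. The natural factorisation is $p^{k}-1 = (p-1)(p^{k-1}+p^{k-2}+\cdots+p+1)$. Since $\varphi$ and $\sigma$ are multiplicative, one has $\varphi(\bar q) = \prod_{i=1}^{r}(p_i-1)$ and, because $\bar q^{\ts k-1}$ has the prime factorisation $\prod_i p_i^{k-1}$, also $\sigma(\bar q^{\ts k-1}) = \prod_{i=1}^{r} \sigma(p_i^{k-1}) = \prod_{i=1}^{r}(1+p_i+\cdots+p_i^{k-1})$. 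Multiplying these two products gives exactly $\prod_{i=1}^{r}(p_i^{k}-1)$, which is the reciprocal of $f^{}_k(q)$; this proves the identity.

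For the two-sided estimate, I would bound $f^{}_k(q) = \prod_{i=1}^{r}\frac{1}{p_i^{k}-1}$ against $\frac{1}{\bar q^{\ts k}} = \prod_{i=1}^{r}\frac{1}{p_i^{k}}$. The lower bound $f^{}_k(q) > \bar q^{-k}$ is immediate from $p_i^{k}-1 < p_i^{k}$ for each $i$ (with strict inequality since $r\geqslant 1$, as $q\geqslant 2$ forces $\bar q \geqslant 2$). For the upper bound, write $\frac{1}{p^{k}-1} = \frac{1}{p^{k}}\cdot\frac{1}{1-p^{-k}} = \frac{1}{p^{k}}\sum_{j\geqslant 0} p^{-jk}$, so that
\[
  f^{}_k(q) \, = \, \frac{1}{\bar q^{\ts k}}\prod_{i=1}^{r}\frac{1}{1-p_i^{-k}}
  \, \leqslant \, \frac{1}{\bar q^{\ts k}}\prod_{p\ \mathrm{prime}}\frac{1}{1-p^{-k}}
  \, = \, \frac{\zeta(k)}{\bar q^{\ts k}}\ts ,
\]
using the Euler product for $\zeta(k)$ and the fact that each factor $\frac{1}{1-p^{-k}}$ exceeds $1$. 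The inequality is strict because the primes not dividing $\bar q$ contribute factors strictly greater than $1$ to the full Euler product (and since $k\geqslant 2$, the product $\zeta(k)$ converges).

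There is no real obstacle here; the only point requiring a moment's care is the strictness of the upper bound, which one should note relies on $\bar q$ missing at least one prime — true for every $\bar q$, since $\bar q$ has only finitely many prime factors while there are infinitely many primes. Everything else is a direct computation with multiplicative functions and the Euler product.
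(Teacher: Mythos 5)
Your proposal is correct and follows essentially the same route as the paper: both prove the identity by multiplicativity of $\varphi$ and $\sigma$ together with the factorisation $p^{k}-1=(p-1)(1+p+\cdots+p^{k-1})$, and both obtain the two-sided bound by rewriting $\bar{q}^{\ts k} f^{}_k(q)=\prod_{p\mid\bar{q}}(1-p^{-k})^{-1}$ and comparing with the Euler product for $\zeta(k)$. Your remarks on strictness (at least one prime divides $\bar{q}$, and infinitely many primes are missing from the finite subproduct) are the same observations implicit in the paper's conclusion that this quantity lies in the open interval $(1,\zeta(k))$.
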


\begin{proof}
  Under our conditions, the equality
  $f^{}_k(q)=\bigl(\varphi(\bar{q})\, \sigma(\bar{q}^{\ts
    k-1})\bigr)^{-1}$ follows easily from the standard definitions of
  $\varphi$ and $\sigma$. Then, following the argument of
  \cite[Thm.~329]{HW}, we have
\[
    \sigma(\bar{q}^{\ts k-1}) \, = \,
    \prod_{p|\bar{q}}\frac{p^{k}-1}{p-1}
    \, = \, \bar{q}^{\ts k-1}\prod_{p|\bar{q}}
    \frac{1-p^{-k}}{1-p^{-1}} \ts .
\]
Since $\varphi(\bar{q})=\bar{q}\prod_{p|\bar{q}}(1-p^{-1})$ and
$\zeta(k) = \prod_{p} (1-p^{-k})^{-1}$, we clearly get
\[
  {\bar{q}^{\ts k}} {f^{}_k (q)} \, = \,
  \frac{\bar{q}^{\ts k}} 
  {\varphi(\bar{q})\ts\ts \sigma(\bar{q}^{\ts k-1})}
  \, = \, \prod_{p|\bar{q}}
  \frac{1}{1-p^{-k}}\, \in \, 
  \bigl( 1, {\zeta(k)} \bigr) ,
\]
  which is the desired result.
\end{proof}

\begin{lemma}\label{lem:phi}
  Let\/ $x>0$ be fixed, $q\in\NN^{}_{k+1}$, and let\/ $\ell=q/\bar{q}.$
  Then, one has
\[
  \varphi(x,q) \, = \, \varphi(\ell x, \bar{q}) \ts .
\]
Moreover, for square-free\/ $q$ --- which means\/
$q = \bar{q} \in\NN^{}_{2}$ --- we have
\[
  \left|\varphi(x,q) - x \ts\ts \varphi(q)\right|\leqslant  d(q) .
\]
Consequently, we have the inequality 
\[
\varphi(x,q) \, \leqslant \, x \ts\ts \varphi(q) + d(q).
\]
\end{lemma}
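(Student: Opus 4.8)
The plan is to handle the three assertions in turn: the rescaling identity is pure bookkeeping, the square-free estimate is the analytic heart and rests on Möbius inversion, and the final inequality follows by feeding the first into the second.

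First I would establish $\varphi(x,q) = \varphi(\ell x, \bar{q})$. Since $\bar{q} = \rad(q)$ has exactly the same prime divisors as $q$, the conditions $\gcd(m,q) = 1$ and $\gcd(m,\bar{q}) = 1$ are equivalent; moreover $qx = \ell\ts\bar{q}\ts x = \bar{q}\ts(\ell x)$. Substituting both observations into the definition \eqref{eq:def-genphi} gives the identity at once.

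Next, for square-free $q$ (so $q = \bar{q} \in \NN^{}_2$), I would count the integers $1 \leqslant m \leqslant qx$ coprime to $q$ by Möbius inversion over the divisors $d | q$. Using $\sum_{d|\gcd(m,q)}\mu(d) = \bigl[\gcd(m,q)=1\bigr]$ together with the fact that the number of $m \leqslant qx$ divisible by a fixed integer $d$ is $\lfloor qx/d\rfloor$, one obtains
\[
  \varphi(x,q) \, = \, \sum_{d|q} \mu(d) \left\lfloor \frac{qx}{d} \right\rfloor .
\]
Writing $\lfloor qx/d\rfloor = qx/d - \{qx/d\}$ with $0 \leqslant \{qx/d\} < 1$, and using $\sum_{d|q}\mu(d)/d = \prod_{p|q}(1-p^{-1}) = \varphi(q)/q$, the principal term collapses to $x\ts\varphi(q)$, while the remaining sum is bounded in modulus by $\sum_{d|q}|\mu(d)| = d(q)$. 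This yields $\left|\varphi(x,q) - x\ts\varphi(q)\right| \leqslant d(q)$.

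Finally, for general $q \in \NN^{}_{k+1}$ I would combine the two preceding parts. By the rescaling identity and the square-free estimate applied to $\bar{q}$, we have $\varphi(x,q) = \varphi(\ell x, \bar{q}) \leqslant \ell x\ts\varphi(\bar{q}) + d(\bar{q})$. Since every prime dividing $\ell = q/\bar{q}$ already divides $\bar{q}$, one has $\varphi(q) = \ell\ts\varphi(\bar{q})$, so $\ell x\ts\varphi(\bar{q}) = x\ts\varphi(q)$; and $d(\bar{q}) \leqslant d(q)$ because $\bar{q} | q$. Hence $\varphi(x,q) \leqslant x\ts\varphi(q) + d(q)$, as claimed. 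There is no real obstacle here; the only point requiring a little care is the compatibility of the floor functions ($\lfloor\lfloor qx\rfloor/d\rfloor = \lfloor qx/d\rfloor$ for integer $d \geqslant 1$), and the Möbius step is what actually does the work.
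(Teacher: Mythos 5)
Your proof is correct. The first identity is argued exactly as in the paper ($q$ and $\bar{q}$ have the same prime divisors, and $qx=\bar{q}\,(\ell x)$). The real difference is in the quantitative estimate: the paper does not prove $\lvert\varphi(x,q)-x\,\varphi(q)\rvert\leqslant d(q)$ at all, but simply cites Codec\`a and Nair \cite{CN2005} for the second and third assertions, whereas you supply the standard self-contained argument — M\"obius inversion giving $\varphi(x,q)=\sum_{d\mid q}\mu(d)\lfloor qx/d\rfloor$, the main term $x\,\varphi(q)$ from $\sum_{d\mid q}\mu(d)/d=\varphi(q)/q$, and an error of at most $\sum_{d\mid q}\lvert\mu(d)\rvert=d(q)$ because $q$ is square-free. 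This makes the lemma independent of the reference at the cost of a few lines, and every step checks out (the floor-function point you flag is indeed harmless, since one can just count multiples of $d$ up to the real number $qx$ directly). A further, equally harmless, difference: you prove the final inequality for arbitrary $q\in\NN_{k+1}$ via $\varphi(q)=\ell\,\varphi(\bar{q})$ and $d(\bar{q})\leqslant d(q)$, which is slightly more general than what the paper states and uses — in Section~\ref{sec:proof} the authors invoke the last assertion only for square-free $q$, where it is just the one-sided form of the two-sided bound. Your stronger reading is consistent with the lemma as written, and your derivation of it is valid.
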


\begin{proof} 
  The first claim follows from the definition of $\varphi(x,q)$ in
  \eqref{eq:def-genphi}, since $\gcd(m,q)=1$ if and only if
  $\gcd(m,\bar{q})=1$, where $q = \ell\ts \bar{q}$ with
  $\ell \ts | \ts \bar{q}^k$ under our assumption. The second and third
  relations follow from \cite[Secs.~1 and 2]{CN2005}.
\end{proof}

\begin{lemma}\label{lem:mualpha}
  Let\/ $k\geqslant 2$ be an integer. For sufficiently small positive 
  real $y$, one has
\[
  \sum_{q\geqslant {y^{-1}}}
  \frac{\lvert\mu(q)\rvert}{q^{k}} \, = \,
  \frac{y^{k-1}}{(k-1) \ts \zeta(2)} +
  O\bigl( y^{k-\frac{1}{2}}\bigr) \, = \,
  \frac{y^{k-1}}{(k-1) \ts \zeta(2)}
  \bigl( 1 + O (y^{1/2})\bigr). 
\]  
\end{lemma}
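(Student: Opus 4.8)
The plan is to reduce the tail sum to the classical counting function of the square-free integers and then to apply partial summation. Recall that $\lvert\mu(q)\rvert$ is the indicator of the square-free positive integers, so that, writing $Q(t)\defeq\sum_{q\leqslant t}\lvert\mu(q)\rvert$, one has the standard estimate $Q(t)=t/\zeta(2)+E(t)$ with $E(t)=O\bigl(\sqrt{t}\bigr)$; this follows from the identity $\lvert\mu(q)\rvert=\sum_{d^2\mid q}\mu(d)$ together with a hyperbola argument, see \cite{Apo,HW}.

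Next, with $N\defeq y^{-1}$, I would express the tail via Riemann--Stieltjes summation by parts against the weight $t\mapsto t^{-k}$. Since $k\geqslant 2$ gives $Q(t)/t^{k}=O(t^{1-k})\to 0$ as $t\to\infty$, the contribution at infinity vanishes, and one obtains
\[
  \sum_{q\geqslant N}\frac{\lvert\mu(q)\rvert}{q^{k}}
  \,=\, -\,\frac{Q(N)}{N^{k}}
  \,+\, k\int_{N}^{\infty}\frac{Q(t)}{t^{k+1}}\dd t \, .
\]
(The fact that $N=y^{-1}$ need not be an integer is immaterial: rounding changes the sum by at most one term of size $O(y^{k})$, which is dwarfed by the error term below.) Inserting $Q(t)=t/\zeta(2)+E(t)$ splits the right-hand side into a main part and an error part. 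Using $\int_{N}^{\infty}t^{-k}\dd t=N^{1-k}/(k-1)$, the main part collapses to
\[
  -\,\frac{1}{\zeta(2)\,N^{k-1}}
  \,+\, \frac{k}{(k-1)\,\zeta(2)\,N^{k-1}}
  \,=\, \frac{1}{(k-1)\,\zeta(2)\,N^{k-1}}
  \,=\, \frac{y^{k-1}}{(k-1)\,\zeta(2)} \, ,
\]
which is precisely the asserted leading term.

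For the error part, the boundary contribution is $E(N)/N^{k}=O\bigl(N^{\frac12-k}\bigr)=O\bigl(y^{k-\frac12}\bigr)$, while the integral contribution is
\[
  k\int_{N}^{\infty}\frac{E(t)}{t^{k+1}}\dd t
  \,=\, O\!\left(\int_{N}^{\infty}t^{-k-\frac12}\dd t\right)
  \,=\, O\bigl(N^{\frac12-k}\bigr)
  \,=\, O\bigl(y^{k-\frac12}\bigr) \, ,
\]
the integral converging because $k+\tfrac12>1$. Summing the main and error parts yields the first displayed identity of the lemma, and factoring out the leading term gives the second. There is no serious obstacle here; the two points deserving a moment of care are the exact cancellation in the main term --- the $-1$ from the boundary value at $N$ playing off against the $k/(k-1)$ from the integral --- and checking that the square-free remainder $E(t)=O(\sqrt{t})$ is weighted against a kernel decaying fast enough to be integrable, which holds precisely because $k\geqslant 2$.
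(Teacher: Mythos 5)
Your proposal is correct and follows essentially the same route as the paper: partial (Abel/Stieltjes) summation of the tail against $t^{-k}$, combined with the classical estimate $\sum_{q\leqslant t}\lvert\mu(q)\rvert = t/\zeta(2)+O\bigl(\sqrt{t}\,\bigr)$, with the boundary term and the integral producing exactly the cancellation you highlight. The only cosmetic difference is bookkeeping: the paper writes the boundary contribution as $-y^{k}\sum_{q<y^{-1}}\lvert\mu(q)\rvert$ rather than $-Q(N)/N^{k}$, which your rounding remark already covers.
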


\begin{proof}
Let $M(t) \defeq \sum_{q\leqslant t} \lvert \mu (q) \rvert$.
By partial summation, we obtain 
\[
  \sum_{q\geqslant {y^{-1}}}
  \frac{\lvert \mu(q)\rvert}{q^{k}} \, = \, k
  \int_{y^{-1}}^\infty M(t) \ts
  \frac{\dd t}{t^{k+1}} \, - \, y^{k}
  \sum_{q< {y^{-1}}} \lvert \mu(q) \rvert \ts .
\]  
Since $M(t) = \frac{t}{\zeta(2)} + O \bigl(\sqrt{t}\,\bigr)$ 
for $t\in[1,\infty)$, see \cite[Thm.~333]{HW}, we find
\begin{align*}
  \sum_{q\geqslant {y^{-1}}}
     \frac{\lvert\mu(q)\rvert}{q^{k}}
  \, & = \, k\int_{{y^{-1}}}^\infty
       \left(\frac{t}{\zeta(2)} +
       O \bigl(\sqrt{t}\, \bigr)\right)
       \frac{\dd t}{t^{k+1}} \, - \,
       y^{k}\left(\frac{y^{-1}}{\zeta(2)}
       + O \bigl( y^{-1/2} \bigr)\right)\\
     & = \, \frac{k}{\zeta(2)}
       \int_{y^{-1}}^\infty \frac{\dd t}{t^{k}}
       \, - \, \frac{y^{k-1}}{\zeta(2)}
       \, + \, O\! \left(\int_{y^{-1}}^\infty
       \frac{\dd t}{t^{k+\frac{1}{2}}}\right)
       +O \bigl( y^{k-\frac{1}{2}}\bigr)\\[2mm]
     & = \, \frac{y^{k-1}}{(k-1) \ts \zeta(2)}
       \, + \, O\bigl( y^{k-\frac{1}{2}}\bigr). \qedhere
\end{align*}
\end{proof}
We now have all prerequisites to turn to our main result.

\section{Proof of Theorem~\ref{thm:main}}\label{sec:proof}

We remind the reader that $k\geqslant 2$ is a fixed integer, thus any
dependence on $k$ will be suppressed. Our arguments use a real
parameter $\alpha\in[1,k]$, whose value will be specified later as a
function of another parameter $\varepsilon$. In what follows, since we
are ultimately interested in a limit as $x\to0^+$, all big-$O$ terms
are taken over $x\in(0,x_0(k)]$ for some sufficiently small
$x_0(k)>0$. Note also that the constants in the big-$O$ symbols to
follow do depend on $k$, but are independent of $\alpha$ in the range
$\alpha\in[1,k]$.

Recall that a $k$-free $q$ has a unique decomposition
$q=\ell\ts \bar{q},$ where $\bar{q}$ is squarefree and
$\ell \ts | \ts \bar{q}^{\ts k-1}$, so that
$q\leqslant \bar{q}^{\ts k}$. By Eq.~\eqref{eq:Z-alt} and two
applications of Lemma \ref{lem:phi},
\begin{align}
\nonumber  Z^{}_k(x) \, & = \sum_{\substack{q\in\NN_{k+1}\\
    \bar{q}^{\ts k}\ts \geqslant 1/x}}
    \varphi(x,q) f^{}_k (q)^2  \; = 
    \sum_{\substack{\tilde{q}\in\NN^{}_{2}\\ {\tilde{q}^{\ts k}}\geqslant 1/x}}
    f^{}_k (\tilde{q})^2 \sum_{\ell|\tilde{q}^{\ts k-1}}
     \varphi(\ell x,\tilde{q}) \\[2mm]
\nonumber    & \geqslant
    \sum_{{q^\alpha}\geqslant 1/x}|\mu(q)|\,
  f^{}_k (q)^2 \sum_{\ell|q^{k-1}}\varphi(\ell x,q) \\[2mm]
\nonumber  & = \sum_{q^\alpha \geqslant 1/x}
    |\mu(q)|\, f^{}_k (q)^2 \sum_{\ell|q^{k-1}} \Bigl( \ell x \ts \varphi(q)
    + O\bigl( d(q)\bigr) \Bigr).
\end{align}
Therefore, for $q^\alpha\geqslant \frac{1}{x^{}_0(k)}$, we have, using
Lemma~\ref{lem:fk} for the last two lines,
\begin{align}
  \nonumber Z^{}_k(x) \,  & \geqslant \, x \!
    \sum_{{q^\alpha}\geqslant 1/x}|\mu(q)|\,
    \varphi(q)\ts\ts \sigma(q^{k-1})f^{}_k (q)^2 \, + \,
    O\! \left(\sum_{{q^\alpha}\geqslant 1/x}
    \! |\mu(q)|\,d(q) \ts\ts d(q^{k-1}) f^{}_k (q)^2 \right)\\
\nonumber & = \, x \! \sum_{q^\alpha \geqslant 1/x}\lvert \mu (q) \rvert \ts
  f^{}_{k} (q) \, + \, O \! \left( \sum_{q^\alpha \geqslant 1/x}
  \lvert \mu (q) \rvert \ts\ts d(q) \ts \ts d(q^{k-1})
  f^{}_{k} (q)^2 \right) \\[1mm]
\label{ps1}  &\geqslant \, x  \!  \sum_{\substack{{q}\geqslant x^{-1/\alpha}}}
    \frac{\lvert \mu(q) \rvert}{q^k} \, + \, O \! \left(
    \sum_{ {q}\geqslant x^{-1/\alpha}}
    \frac{\lvert \mu(q)\rvert \ts\ts d(q) \ts\ts
    d(q^{k-1})}{q^{2k}} \right),
\end{align}
where, as stated above, the implied constant in the big-$O$ term
depends on the value of $k$, but is independent of $\alpha$ in the
range $1\leqslant \alpha\leqslant k$.

Now let $\varepsilon\in(0,1)$ be arbitrary, but fixed. Using a result 
recorded in Hardy and Wright \cite[Thm.~317]{HW}, for $x$ 
small enough, and thus, correspondingly, for $q$ large enough,
one has
\begin{equation}\label{eq:HW}
    d(q) \ts \ts d(q^{k-1}) \, \leqslant \, 
    q^{\varepsilon }.
\end{equation}
Consequently, we also have, for $x\leqslant x_0(k)$,
\[
    Z^{}_k(x) \, \geqslant \, x  \!  
    \sum_{\substack{{q}\geqslant x^{-1/\alpha}}}
    \! \frac{\lvert \mu(q) \rvert}{q^k} \; + \; O\! \left(
    \sum_{ {q}\geqslant x^{-1/\alpha}}
    \frac{\lvert \mu(q)\rvert}{q^{2k-\varepsilon}} \right).
\]

Applying Lemma \ref{lem:mualpha} with $y=x^{1/\alpha}$, and using 
only the leading term, we have 
\[
   \sum_{ {q}\geqslant x^{-1/\alpha}}
   \frac{\lvert \mu(q)\rvert}{q^{2k-\varepsilon}}
    \, = \, O_\varepsilon\!\left({x^{
    \frac{1}{\alpha}(2k-1-\varepsilon)}}\right),
\]
where $O_\varepsilon$ is used to indicate the dependence of the
implied constant on $\varepsilon$. Continuing the inequality for
$Z_k(x)$ from above, and applying Lemma \ref{lem:mualpha} yet again
with $y=x^{1/\alpha}$, we obtain
\begin{equation}\label{eq:OO}
     Z^{}_k(x) \, \geqslant \, 
     \frac{x^{\frac{k-1}{\alpha}+1}}{(k-1) \ts \zeta(2)} \, + \,
  O\left( x^{\frac{2 k-1}{2 \alpha}+1}\right) \, + \, O_\varepsilon\!\left({x^{
      \frac{1}{\alpha}(2k-1-\varepsilon)}}\right). 
\end{equation}

As before, the implied constants in the big-$O$ terms in Eq.~\eqref{eq:OO} are independent of the parameter $\alpha\in[1,k]$. Therefore, we are free to choose $\alpha$ in that range, and in order to make the second remainder term asymptotically smaller than the first term on the right, we must have $\frac{k-1}{\alpha}+1<\frac{1}{\alpha}(2k-1-\varepsilon)$, which simplifies to 
\begin{equation}\label{eq:allx}
    \alpha \, < \, k-\varepsilon \ts .
\end{equation} Thus we let  $\alpha=k(1-\varepsilon)$, so that $\alpha$ satisfies \eqref{eq:allx}. 
%note that the constant $2$ in the definition of $\varepsilon$ could be 
%replaced by any real number strictly greater than one and 
%\eqref{eq:allx} would be satisfied. 
Now, \eqref{eq:OO} becomes
\begin{equation*}
  Z^{}_k(x) \, \geqslant \, \frac{\ts x^{\left(2-\frac{1}{k}\right)
  \left(\frac{1}{1-\varepsilon}\right)-
  \frac{\varepsilon}{1-\varepsilon} }}
  {(k-1) \ts \zeta(2)}  \, + \,  O\!\left( x^{\left(2-\frac{1}{2k}\right)
   \left(\frac{1}{1-\varepsilon}\right)-\frac{\varepsilon}{1-\varepsilon}}\right)
   +O_\varepsilon\!\left( x^{\left(2-\frac{1}{k}\right)
   \left(\frac{1}{1-\varepsilon}\right)-\frac{1}{k}
   \left(\frac{\varepsilon}{1-\varepsilon}\right)}\right). 
\end{equation*}
Since the final two big-$O$ terms have, for fixed $\varepsilon$, larger 
exponents than the main term, we obtain 
\begin{equation}\label{eq:liminf}
    \liminf_{x\to 0^+}\, \frac{\log \bigl(Z^{}_k(x)\bigr)}{\log (x)} 
    \, \geqslant \, 2 - \myfrac{1}{k}\ts .
\end{equation}
  
For an upper bound on the limit superior of $\log 
\bigl(Z^{}_k(x)\bigr)/{\log (x)},$ as $x\to 0^+$, we employ the 
last assertion of Lemma~\ref{lem:phi}, which states that
$\varphi(x,q) \leqslant x\ts \varphi(q)+d(q)$ for positive
square-free integers $q$. As in the above argument, we 
let $\varepsilon\in(0,1)$ be arbitrary, but fixed, so that, for 
$x$ sufficiently close to zero, starting with Eq.~\eqref{eq:Z-alt} 
and applying Lemma~\ref{lem:fk}, we get
\begin{align}
\nonumber  Z^{}_k(x) \, & = \, x \! \sum_{{q^k}\geqslant 1/x}|\mu(q)|\,
    \varphi(q)\ts\ts \sigma(q^{k-1})f^{}_k (q)^2 \; + 
    \sum_{{q^k}\geqslant 1/x}
    \! |\mu(q)|\,d(q) \ts\ts d(q^{k-1}) f^{}_k (q)^2 \\[2mm]
\nonumber & = \, x \!
   \sum_{q^k \geqslant 1/x}\lvert \mu (q) \rvert \ts
  f^{}_{k} (q) \; +  \sum_{q^k \geqslant 1/x}
  \lvert \mu (q) \rvert \ts\ts d(q) \ts \ts d(q^{k-1})
  f^{}_{k} (q)^2 \\[2mm]
\label{ps2} &\leqslant \, x \, \zeta(k) \!
     \sum_{\substack{{q}\geqslant x^{-1/k}}}
    \frac{\lvert \mu(q) \rvert}{q^k} \; +  \sum_{ {q}\geqslant x^{-1/k}}
    \frac{\lvert \mu(q)\rvert}{q^{2k-\varepsilon}}\ts ,
\end{align}
where for the last step, as above, we have used 
the inequality \eqref{eq:HW}.

Applying Lemma~\ref{lem:mualpha}, with $y=x^{1/k}$, to each of the sums in \eqref{ps2}, for $x$ sufficiently close to zero, we have 
\begin{align*} 
      Z^{}_k (x) &\, \leqslant \, \frac{x^{\frac{k-1}{k}+1}
      \zeta(k)}{(k-1) \ts \zeta(2)} \, + \,
     O\!\left( x^{\frac{2 k-1}{2 k}+1}\right) \,+ \,
     \frac{x^{\frac{1}{k}(2k-1-\varepsilon)}}{(k-1)\ts \zeta(2)} \,+ \,
     O_\varepsilon\!\left(x^{2 - \frac{1+\varepsilon}{2k}}\right)\\[2mm]
      &= \, \frac{x^{2-\frac{1+\varepsilon}{k}}}{(k-1)\ts \zeta(2)}
         \, \bigl( 1 + \zeta(k) \ts x^{\varepsilon} \bigr)
        \,+ \, O\!\left( x^{2-\frac{1}{2 k}}\right) \,+ \, O_\varepsilon\!\left(x^{2 - \frac{1+\varepsilon}{2k}}\right), 
\end{align*}
where, as before, we have written $O_\varepsilon$ to indicate the dependence of the implied constant on $\varepsilon$. Since the final two big-$O$ terms have, for fixed $\varepsilon$, larger exponents than the main term, we obtain 
\begin{equation}\label{eq:limsup}
     \limsup_{x\to 0^+}\, \frac{\log \bigl(Z^{}_k(x)\bigr)}{\log (x)}  \, = \,
       2 - \myfrac{1}{k}\ts .
\end{equation}

Putting together \eqref{eq:liminf} and \eqref{eq:limsup} completes the proof 
of the theorem.

\section{Further developments}\label{sec:further}

The interested reader will note that the contributions from the  M\"obius 
and divisor sums can be made explicit using more delicate arguments from
analytic number theory. In fact, with the use of such deep techniques, 
after seeing this paper on the arXiv, N.~Rome and E.~Sofos have proved 
a power-law asymptotic for $Z_k(x)$ using methods quite different from 
ours; see their preprint \cite{RSpre} for details.

\section*{Acknowledgements}
It is our pleasure to thank Uwe Grimm for valuable discussions. We also 
thank the anonymous referee for a number of helpful suggestions, and 
N.~Rome and E.~Sofos for communicating with us about their result, 
which is now available on the arXiv (\texttt{arXiv:1907.04845}). Our
work was supported by the Research Centre for Mathematical Modelling
(RCM$^{2}$) of Bielefeld University.

\end{document}